\def\MMR#1{\relax}
\newcommand{\cyr}{%
\renewcommand\rmdefault{wncyr}%
\renewcommand\sfdefault{wncyss}%
\renewcommand\encodingdefault{OT2}
\normalfont\selectfont}
\DeclareTextFontCommand{\textcyr}{\cyr}
\newtheorem{theorem}{Theorem}[section]
\newtheorem{definition}[theorem]{Definition} 
\newtheorem{proposition}[theorem]{Proposition}
\newtheorem{remark}[theorem]{Remark}
\subjclass{Primary 53C05 Connections, general theory; Secondary 53C17 Sub-Riemannian geometry}
\keywords{Metrics, Lifts of metrics, Principal bundles}
\date{February 20, 2010}
\title{Wagner Lift of Riemannian metric to Orthogonal Frame Bundle}
\author{Mikhail Malakhaltsev}
\address{Department of Mathematics, Kazan State University, 
Department of Mathematics, Ulitza Kremlovskaia, Dom 8, Kazan, Russia}
\email{mikhail.malakhaltsev@ksu.ru}
\author{José Ricardo Arteaga B.}
\address{Department of Mathematics, Universidad de los Andes,
Cra. 1 ESTE 18A-10, Bogotá, Colombia}
\email{jarteaga@uniandes.edu.co}
\begin{document}
\maketitle

\begin{abstract}
In the present work we construct  a lift of a metric $g$ on a 2-dimensional oriented Riemannian manifold
$M$ to a metric $\hat{g}$ on the total space $P$ of the orthonormal frame bundle of $M$.
We call this lift the \textit {Wagner lift}. Viktor Vladimirovich Wagner (1908 -1981) proposed a technique to extend a
metric defined on a non-holonomic distribution to its prolongation via the Lie brackets. 
We apply the Wagner construction to the specific case when the distribution is the infinitesimal connection in the orthonormal frame bundle
which corresponds to a Levi-Civita connection.
We find relations between the geometry of the Riemannian manifold $(M,g)$ and of the total space $(P,G)$ of the orthonormal frame bundle endowed
with the lifted metric.

\end{abstract}

\section{Introduction}

A sub-Riemannian manifold $(\mathcal{H}, g)$ is a distribution $ \mathcal{H}$ on a manifold $ Q $ endowed by a metric tensor $g$ \cite{Montgomery}.

V.V. Wagner \cite{Wagner} was one of the first mathematicians who considered the geometry of a nonholonomic distribution endowed by metric tensor. One of his main result was the construction which made it possible to extend a metric given on a distribution $\mathcal{H}$ to the manifold $ Q $. This extension was made via the Lie bracket of vector fields.

Let us describe this construction in case $\dim Q = n$ and $\dim \mathcal{H}=2$.
Locally ${\mathcal{H}}$ can be given as follows:
\begin{equation}
{\mathcal{H}}= span\left\{E_1, E_2\right\},\notag
\end{equation} 
where $E_{1}, E_{2}$ are linearly independent vector fields such that the Lie bracket
\begin{equation}
[E_{1}, E_{2}] \notin {\mathcal{H}}.\notag
\end{equation}
If $n\geq 3$, we can set 
\begin{equation}
{\mathcal{H}^{1}}=span\left\{E_1, E_2, E_{3}\right\},\quad \text{where}\quad E_{3}=[E_{1}, E_{2}].\notag
\end{equation}
The distribution ${\mathcal{H}^{1}}$ is called the \textit{first prolongation} of ${\mathcal{H}}$ via the Lie bracket. 
If
\begin{equation}
[E_{i}, E_{j}] \notin \mathcal{H}^{1}, \, i,j=1,2,3,\notag
\end{equation}
we can define 
\begin{equation}
{\mathcal{H}^{2}}=span\left\{E_1, E_2, E_{3}, E_{4}, E_{5}\right\},\quad \text{where}\quad E_{4}=[E_{1}, E_{3}] \, \text{y}\, E_{5}=[E_{2}, E_{3}].\notag
\end{equation}
$\mathcal{H}^{2}$ is called the second prolongation of  $\mathcal{H}$. In this manner, step-by-step one can construct a series of prolongations via the Lie brackets.

For a given metric tensor $g$ on $\mathcal{H}$ and a framing $\mathcal{N}$ of $\mathcal{H}$ (i.\,e. $TQ = \mathcal{H}\oplus \mathcal{N}$),  Wagner extended the metric $g$ on $\mathcal{H}$ to metrics on the prolongations ${\mathcal{H}^{k}}$ \cite{Wagner}. For simplicity, we will explain this construction for the case of completely non-holonomic distributions, for which $\mathcal{H}^{1}=TQ$. 
Wagner introduces the non-holonomity  tensor 
$N : \mathcal{H} \times \mathcal{H} \to \mathcal{N}$, $N(X,Y) = pr_\mathcal{N}([X,Y])$, where $pr_\mathcal{N}$ is the projector onto $\mathcal{N}$ with respect to $\mathcal{H}$. For non-holonomic distributions $N$ is surjective. The metric $g$ on $\mathcal{H}$ induces the metric $g^\Lambda$ on the space $\Lambda^2(\mathcal{H})$ of bivectors on $\mathcal{H}$ in the standard way. Then,
$\Lambda^2(\mathcal{H}) \cong \ker N \oplus (\ker N)^\perp$, and in this case $(\ker N)^\perp \cong \mathcal{N}$. The metric $g^\Lambda$ induces a metric on $(\ker N)^\perp \subset \Lambda^2(\mathcal{H})$ and this metric is transferred to a metric $g_\mathcal{N}$ on $\mathcal{N}$. Thus, we have the metric $g$ on $\mathcal{H}$ and the metric $g_\mathcal{N}$ on 
$\mathcal{N}$, and thus we get the metric $g^{1}$ on $\mathcal{H}^1=TQ$ assuming that $\mathcal{H}$ is orthogonal to $\mathcal{N}$.

Now consider the case $\dim Q = 3$ and $\dim\mathcal{H}=2$. In this case, if $\mathcal{H}$ is completely non-holonomic, $TQ = \mathcal{H}^{1}$. Now take an orthonormal frame $E_1, E_2$ of $\mathcal{H}$ with respect to $g$, and set $E_3 = N(E_1,E_2)$. Then, the metric $g^1$ on $TQ=\mathcal{H}^{1}$ can be uniquely determined by the condition that $\{E_1, E_2,E_3\}$ is an orthonormal frame of $TQ$. 

In the present paper we consider a 2-dimensional Riemannian manifold $(M,g)$, and take the orthonormal frame bundle $P\longrightarrow M$. The metric $g$ determines the Levi-Civita connection on $M$ which, in turn, determines the horizontal distribution $\mathcal{H}$ on $P$. We take the lift $g_\mathcal{H}$ of $g$ to $\mathcal{H}$ and thus obtain a sub-Riemannian manifold $(\mathcal{H},g_\mathcal{H})$. For the framing $N$ of $\mathcal{H}$ we take the vertical distribution (the distribution tangent to fibres of $P \to M$).  And thus, using Wagner's construction, we define a metric $\hat g$ on $TP$ and call it the \textit{Wagner lift} of the metric $g$ on $M$.

La organización de las sectiones siguientes es como sigue. 
La sección 2 introduce los temas preliminares necesarios de conexiones en haces principales, lo cual todo se puede encontrar en \cite{Montgomery02} and \cite{KN}.  
La sección 3 muestra el levantamiento de una distribution no holonómica and del tensor no holonómico, cuando el espacio  base is una variedad de Riemann bi-dimensional. 
La sección 4 describe lo que hemos denominado el lavantamiento de Wagner de una métrica bidimensional al haz de principal de los marcos ortonormales.
La sección 5 muestra las relationes entre los objetos geométricos de la variedad base and del haz principal.
Finalmente en la sección 6 se muestran dos ejemplos en los cuales se muestra las técnicas de levantamiento and además exhibe algunos de los resultados mostrados en este artículo.
 
\section{Preliminares}

Here we present a brief review of the theory of connections in principal bundle which we will use throughout the paper (for the details we refer the reader to \cite{KN}, Vol. I). 

Let $\pi : P \to M$ be a $G$-principal bundle over a smooth manifold, where $G$ is a Lie group. 
We will denote the action of $G$ on the total space $P$ as follows
\begin{eqnarray}
P\times G &\longrightarrow& P \notag\\
(u,a) &\mapsto& ua = R_{a}u  \notag
\end{eqnarray}
The fibres of $P(M,G)$ are the orbits of the group action.

The action of  $G$ on $P$ induces a homomorphism $\sigma$ of the Lie algebra $\mathfrak{g}$ of $G$ to the Lie algebra $\mathfrak{X}(P)$ of vector fields on $P$:
\begin{equation}
\sigma : \mathfrak{g} \longrightarrow \mathfrak{X}(P), \quad \sigma (A)_u = \dfrac{d}{dt}\big|_{t=0} u \exp(tA).
\label{eq:2_1}
\end{equation} 
The vector fields $\sigma (A)$ are called \textit{fundamental vector fields}.

Let us denote by $G_{u}$ the subspace in the tangent space $T_{u}P$ consisting of vectors tangent to the fibres of the bundle, call it the \textit{vertical subspace} at $u$. Note that $G_u = span \left\{\sigma (A)_u \, | \,  A \in  \mathfrak{g} \right\}$.

Recall that the infinitesimal connection $\Gamma$ on $P$ is a distribution $\mathcal{H}$ such that
\begin{enumerate}
\item $T_{u}P=\mathcal{H}_{u}\oplus G_{u}$,
\item $\mathcal{H}_{ua}= (R_{a})_{*}\mathcal{H}_{u}$.
\end{enumerate}
We will call $\mathcal{H}_{u}$ the \textit{horizontal subspace}. A curve $\tilde{\gamma}(t)$ in $P$ such that 
the tangent vector $\frac{d}{dt}\tilde{\gamma}(t)$ lies in the horizontal subspace $\mathcal{H}_{\gamma(t)}$ is said to be 
\textit{horizontal} 

If $\gamma(t)$, $t \in [a,b]$ is a curve in $M$, then, for each $u \in \pi^{-1}(\gamma(a))$, there exists a unique horizontal curve $\tilde{\gamma}(t)$ such that $\tilde{\gamma}(a)=u$ and $\pi(\tilde{\gamma}(t))=\gamma(t)$. The curve 
$\tilde{\gamma}(t)$ is called a \textit{horizontal lift} of $\gamma$.

Each vector field $X$ on $M$ determines uniquely a vector field $X^h$ on $P$ such that $X^h(u) \in \mathcal{H}_u$  and $d\pi(X^h(u) = X(\pi(u))$, for each $u \in P$. The vector field  $X^h$ is called the \textit{horizontal lift} of $X$.

Now let us consider the orthonormal frame principal bundle $SO(M)$ of an oriented Riemannian manifold $(M,g)$. 
Then $G=SO(n)$ and its Lie algebra is the Lie algebra ${\mathfrak{o}}(n)$ of the antisymmetric matrices.
The \textit{canonical form} $\theta$ on $SO(M)$ is a 1-form with values in $\mathbf{R}^{n}$ 
($\theta : T SO(M) \longrightarrow \mathbf{R}^{n}$) defined by
\begin{equation}
\theta (X) = u^{-1}(d\pi (X)) \quad \text{for all} \, X\in T_{u}SO(M), \notag
\end{equation} 
where $u$ is considered as an isomorphism between the vector spaces $\mathbf{R}^{n}$ and 
$T_{\pi (u)}M$, $u : \mathbf{R}^{n}\longrightarrow T_{\pi(u)}M$.
Each $\xi \in \mathbf{R}^{n}$ determines a vector field $B(\xi )$ on $SO(M)$ in the following way. For each $u\in SO(M)$, the vector $(B(\xi))_{u}$ is the unique  horizontal vector in $u$ such that 
\begin{equation}\label{eq:2_2}
d\pi ((B(\xi))_{u}) = u(\xi) = \sum_{i=1}^{n}\xi^{i}e_{i},
\end{equation}
where $\xi = (\xi^{1},\dots , \xi^{n} )\in \mathbf{R}^{n}$ and $u=\left\{e_{1}, \dots , e_{n}\right\}$ is the orthonormal frame of $T_{x}M$. $B(\xi)$ is called the \textit{standard horizontal vector field} corresponding to $\xi \in \mathbb{R^n}$.
We will denote by $\{B_i\}$ the standard horizontal vector fields corresponding to the standard frame vectors  
$e_i = (0,0,\dots , 0, 1, 0, \dots , 0)$ in  $\mathbb{R}^n$. 

The standard horizontal vector fields satisfy the following properties:
\begin{enumerate}
\item $\theta (B(\xi))=\xi \quad \forall \xi$,
\item $R_{a}(B(\xi))= B(a^{-1}\xi)$,
\item $B(\xi)$ does not vanish if $\xi \neq \overrightarrow{0}$.
\end{enumerate}

Let $\nabla$ be the Levi-Civita connection of $g$. Then $\nabla$ determines an infinitesimal connection ${\mathcal{H}}$ on $P$ \footnote{One of the first mathematicians who studied infinitesimal connections associated with Levi-Civita connections was  A.P. Shirokov, see \cite{Shirokov67}}. The horizontal lift of a curve $\gamma$ in $M$ is the field of orthonormal frames which is parallely translated along $\gamma$ with respect to the  Levi-Civita connection.

This distribution ${\mathcal{H}}$ is the kernel of the \emph{connection form} $\omega : T SO(M) \to \mathfrak{g}$.  
If $U\subset M$ is an open set such that $SO(M)|_U \cong U \times G$ ($SO(M)|_U$ is trivial), 
then on $SO(M)|_U$ we have ``coordinates'' $(x,\mu)$, $x \in M$, $\mu \in G$. With respect to these coordinates the connection form is written as follows
\begin{equation}\label{eq:2_3}
\omega^k_l = \widetilde{\mu}^{k}_{s} (d\mu^{s}_{l} + \Gamma^{s}_{ij} \mu^{j}_{l} \theta^{i}), \notag
\end{equation}
where $\Gamma^{k}_{ij}$ are the coefficients of the Levi-Civita connection with respect to the frame field determined by the  trivialization, $\theta^{i}$ are the components of the canonical form $\theta$, and $\widetilde{\mu}$ is the matrix inverse to  $\mu$.

The \textit{curvature form} $\Omega : \Lambda_2 T SO(M) \to \mathfrak{g}$ of the connection form $\omega$ is given with respect to the above trivialization as follows:
\begin{equation}
\Omega_{k}^{l}(p) = R_{ijk}^l \theta^i \wedge \theta^j, \notag
\end{equation}
where $R_{ijk}^l$ are the components of the curvature of the Levi-Civita connection.

\section{El Levantamiento de una distribution no Holonómica}

\subsection[section]{Calculations in dimension 2}

Let us consider an oriented 2-dimensional manifold $M$ endowed by a Riemannian metric $g$. 
In this case, $G=SO(2)$ and we will denote its elements by
\begin{equation}
R(\varphi) = 
\left(
\begin{array}{cc}
\cos \varphi & -\sin \varphi
\\
\sin \varphi & \cos \varphi
\end{array}
\right) \notag
\end{equation}

Then, \eqref{eq:2_3} for the connection form $\omega$ on $SO(M)$ determined by the Levi-Civita connection $\nabla$ is written as follows:
\begin{equation}
\omega_{(x,\varphi)} = R(\varphi)^{-1}
 \left(\frac{dR(\varphi)}{d\varphi}d\varphi + \Gamma_a R(\varphi)\theta^a\right).
\label{eq:3_4}
\end{equation}
Here $\Gamma_a$ is a 1-form with values in the Lie algebra $\mathfrak{o}(2)$ whose entries are coefficients of $\nabla$ with respect to an orthonormal frame field:
\begin{equation}
\nabla_{e_a} e_b = \Gamma_{ab}^{c} e_{c}\, ; \quad 
\Gamma_{a}=
\left(
\begin{array}{cc}
  0 & \Gamma_{a2}^{1}\MMR{\Gamma_{a2}^{1}}\\
\Gamma_{a1}^{2} \MMR{\Gamma_{a1}^{2}}& 0
\end{array}
\right) \notag
\end{equation}
The matrix $\Gamma_a = ||\Gamma_a{}^c_b||$ is antisymmetric, that is $\Gamma_{ab}^{c}=-\Gamma_{ac}^{b}$, since $e$ is an orthonormal frame and $\nabla$ is the Levi-Civita connection. 

As $SO(2)$ is a commutative group we have that
$R(\varphi)^{-1} \Gamma_a R(\varphi)= \Gamma_a$. 
In addition,  $\frac{dR}{d\varphi} = R(\varphi+\pi/2)$, so 
\begin{equation}
R^{-1}(\varphi) \frac{dR}{d\varphi} = R(\pi/2) = 
J = 
\left(
\begin{array}{cc}
0 & -1
\\
1 & 0
\end{array}
\right). \notag
\end{equation}

Now we can rewrite \eqref{eq:3_4} as follows: 
\begin{equation}
\omega = J d\varphi + \Gamma_a \theta^a, \notag
\end{equation}
or, in the matrix form, as  
\begin{equation}
\left(
\begin{array}{cc}
0 & \alpha
\\
-\alpha & 0
\end{array}
\right)
= 
\left(
\begin{array}{cc}
0 & d\varphi 
\\
-d\varphi & 0
\end{array}
\right)
+
\left(
\begin{array}{cc}
0 & \Gamma_1{}^1_2 \theta^1
\\
-\Gamma_1{}^1_2 \theta^1 & 0
\end{array}
\right)
+
\left(
\begin{array}{cc}
0 & \Gamma_2{}^1_2 \theta^2
\\
-\Gamma_2{}^1_2\theta^2 & 0
\end{array}
\right)\notag
\end{equation}
Hence  
\begin{equation}
\alpha = d\varphi +\Gamma_{12}^{1} \theta^{1} + \Gamma_{22}^{1}\theta^{2}
\label{eq:3_5}
\end{equation}
where $\Gamma_{ab}^{c}$ are the coefficients of $\nabla$ and $\theta=\left\{\theta^{1}, \theta^{2}\right\}$ are components of the canonical form on $SO(M)$.
 


Let $e=\left\{e_{1}, e_{2}\right\}$ be a local orthonormal frame field on $M$. Then, in terms of the trivialization of $SO(M)$ determined by $e$, the horizontal lifts of the vector fields  $e_{1}$, $e_{2}$ can be written as follows:
\begin{equation}
E^h_1(x,\varphi) = e_1 - \Gamma_1{}^1_2 \partial_\varphi,\quad 
E^h_2(x,\varphi) = e_2 - \Gamma_2{}^1_2 \partial_\varphi.
\label{eq:3_6}
\end{equation}

This follows immediately from \eqref{eq:3_5} and the fact that $\omega(E_{i}^{h})=0$.

Note that  
\begin{equation}
\mathcal{H}=\ker \omega = span \left\{E_{1}^{h}, E_{2}^{h}\right\}\notag
\end{equation}
and the standard horizontal vector fields $B_1$, $B_2$ can expressed in terms of $E^h_1$, $E^h_2$ as follows:
\begin{equation}
\{B_1(x,\varphi),B_2(x,\varphi)\} = \{E^h_1(x,\varphi),E^h_2(x,\varphi)\}R(\varphi)\notag
\end{equation}
because $\{d\pi(B_1(x,\varphi)),d\pi(B_2(x,\varphi))\} = \{e_1,e_2\} R(\varphi)$.

Let $A = \left(
\begin{array}{cc}
0 & -m
\\
m & 0
\end{array}
\right)$, $m\in \mathbf{R}$. 
Then the fundamental vector field 
$\sigma( A ) = m \partial_\varphi$.

This follows from the definition of fundamental vector fields (see \eqref{eq:2_1}) and the simple calculation.
Indeed, $A=mJ$, $\exp (A\cdot t)= R(mt)$, where $J^{2}=-Id$. Hence 
\begin{equation}
\sigma (A) = \dfrac{d}{dt} u (x, \varphi )\cdot \exp (A\cdot t)\big|_{t=0} = m \partial\varphi.\notag
\end{equation}

\subsection{Expressions via structure functions}

For an orthonormal frame field  $\{e_i\}$ one can define the structure functions  $c^k_{ij}$:
\begin{equation}
[e_i,e_j] = c^k_{ij} e_k. \notag
\end{equation}
Note that the coefficients of the Levi-Civita connection can expressed in terms the structure functions.
For the Levi-Civita connection $\nabla$ we have (\cite{KN}, pág. 160) 
\begin{eqnarray}\label{eq:3_7}
g(\nabla_X Y,Z) &=& \frac{1}{2}(X g(Y,Z) + Y g(X,Z) - Z g(X,Y)\nonumber\\ 
						&+& g([X,Y],Z) + g([Z,X],Y) - g([Y,Z],X)).
\end{eqnarray}
In \eqref{eq:3_7}, we set $X=e_i$, $Y=e_j$, $Z=e_k$, and use $\nabla_{e_{i}}e_{j}=\Gamma_{ij}^{k}e_{k}$, then we obtain
that the coefficients of $\nabla$ with respect to the orthonormal frame $\{e_i\}$ can be written as follows: 
\begin{equation}
\Gamma^k_{ij} = \frac{1}{2}(c^k_{ij} + c^j_{ki} + c^i_{kj})
\label{eq:3_8}
\end{equation}
For the curvature tensor 
\begin{equation}\label{eq:3_9}
R(X,Y)Z = \nabla_{X}\nabla_{Y}Z - \nabla_{Y}\nabla_{X}Z - \nabla_{[X,Y]}Z
\end{equation}
we have
\begin{equation}\label{eq:3_11}
R_{ijk}^l = e_i\Gamma^{l}_{jk} - e_j\Gamma^{l}_{ik} + \Gamma^{l}_{is}\Gamma^{s}_{jk} 
- \Gamma^{l}_{js}\Gamma^{s}_{ik} - c^{s}_{ij} \Gamma^{l}_{sk},
\end{equation}
this gives us components $R_{ijk}^l$  in terms of $c^k_{ij}$.

On the other hand, \eqref{eq:3_6} can be rewritten in term of structure functions as follows:
\begin{equation}\label{eq:2_13}
E^h_1(x,\varphi) = e_1 - c^1_{12}(x) \partial_\varphi,
E^h_2(x,\varphi) = e_2 - c^2_{12}(x) \partial_\varphi.
\end{equation}

\subsection{Nonholonomity tensor}

Let us consider the nonholonomity tensor $ N : \Lambda^{2}(\mathcal{H}) \to V$, where $\Lambda^{2}(\mathcal{H}) $ is the space of bivectors on the the distribution. By definition,
\begin{equation}\label{eq:3_13}
N(X,Y) = proj_V ([X,Y]).
\end{equation}

Note that  
\begin{equation}\label{eq:3_14}
\begin{split}
[E^h_1,E^h_2] = [e_1 - c^1_{12}(x) \partial_\varphi,e_2 - c^2_{12}(x) \partial_\varphi] 
= 
[e_1,e_2] + (e_2 c^1_{12}(x) - e_1 c^2_{12}(x)) \partial_\varphi
\\
=
c^1_{12}(x) e_1 + c^2_{12}(x) e_2 + (e_2 c^1_{12}(x) - e_1 c^2_{12}(x)) \partial_\varphi 
\end{split}
\end{equation}
From \eqref{eq:2_13} we find $e_{1}$, $e_{2}$ and  substitute them in (\ref{eq:3_14}), thus we obtain the following expression for the Lie bracket $[E^h_1,E^h_2] $:
\begin{equation}\label{eq:3_15}
[E^h_1,E^h_2] = 
c^1_{12} E^h_1 + c^2_{12} E^h_2 + (e_2 c^1_{12}(x) - e_1 c^2_{12}(x) 
+ (c^1_{12}(x))^2 + (c^2_{12}(x))^2) \partial_\varphi.
\end{equation} 
Thus we get 

\begin{proposition}\label{prop:3_3}
Let $e=\left\{e_{1}, e_{2}\right\}$ be an orthonormal frame field on the base $M$, and $E^{h}=\left\{E_{1}^{h}, E_{2}^{h}\right\}$ are the horizontal lifts to $SO(M)$ defined by \eqref{eq:2_13}. Then the nonholonomicity tensor $N$ satisfies
\begin{equation}\label{eq:3_16}
N(E^h_1(x,\varphi),E^h_2(x,\varphi)) = - K(x) \partial_\varphi
\end{equation} 
where $K(x)$ is the curvature of $(M,g)$ at $x \in M$. 
\end{proposition}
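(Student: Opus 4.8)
The plan is to read off the vertical part of the Lie bracket $[E^h_1,E^h_2]$ from the formula \eqref{eq:3_15}, which is already in hand, and then to recognize the scalar coefficient appearing there as $-K(x)$. Since $E^h_1$ and $E^h_2$ are horizontal while the vertical subspace is spanned by $\partial_\varphi$ (the fundamental vector fields being of the form $m\,\partial_\varphi$, as computed above), applying $proj_V$ to \eqref{eq:3_15} annihilates the $E^h_1$ and $E^h_2$ terms and leaves
\begin{equation}
N(E^h_1(x,\varphi),E^h_2(x,\varphi)) = \bigl(e_2 c^1_{12}(x) - e_1 c^2_{12}(x) + (c^1_{12}(x))^2 + (c^2_{12}(x))^2\bigr)\,\partial_\varphi. \notag
\end{equation}
Thus the proposition is equivalent to the base-manifold identity
\begin{equation}
K(x) = e_1 c^2_{12}(x) - e_2 c^1_{12}(x) - (c^1_{12}(x))^2 - (c^2_{12}(x))^2. \notag
\end{equation}

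To establish this identity I would first write the Gaussian curvature as the relevant component of the Riemann tensor in the orthonormal frame $\{e_1,e_2\}$: with the curvature convention \eqref{eq:3_9} one has $K(x) = g(R(e_1,e_2)e_2,e_1) = R^1_{122}$, the sign being pinned down by the check on the round sphere (where $R(X,Y)Z = g(Y,Z)X - g(X,Z)Y$ and $K\equiv 1$). Then I would substitute $i=1$, $j=2$, $k=2$, $l=1$ into \eqref{eq:3_11} and simplify, using two features of an orthonormal frame in dimension two: the antisymmetry $\Gamma^c_{ab} = -\Gamma^b_{ac}$, which forces $\Gamma^1_{a1} = \Gamma^2_{a2} = 0$, and the identification of the Christoffel symbols with the structure functions via \eqref{eq:3_8} (equivalently, via comparison of \eqref{eq:3_6} with \eqref{eq:2_13}), which gives $\Gamma^1_{12} = c^1_{12}$, $\Gamma^1_{22} = c^2_{12}$ and $\Gamma^2_{12} = \Gamma^1_{21} = 0$. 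Once the vanishing terms in \eqref{eq:3_11} are discarded it collapses to $R^1_{122} = e_1 c^2_{12} - e_2 c^1_{12} - (c^1_{12})^2 - (c^2_{12})^2$, which is precisely the required identity; combining this with the two displays above yields $N(E^h_1,E^h_2) = -K(x)\,\partial_\varphi$.

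The whole argument is a short direct computation, so I do not expect a genuine conceptual obstacle; the one delicate point is bookkeeping — keeping straight which coefficients $\Gamma^k_{ij}$ vanish in an orthonormal frame and making the overall sign of $K$ consistent with the convention in \eqref{eq:3_9}. As a sanity check, or as an alternative derivation if \eqref{eq:3_11} were not available, one may instead invoke the Cartan structure equation $d\omega^1_2 = -K\,\omega^1\wedge\omega^2$ for the surface together with $\omega^1_2 = c^1_{12}\,\omega^1 + c^2_{12}\,\omega^2$, where $\{\omega^i\}$ is the coframe dual to $\{e_i\}$; expanding $d\omega^1_2$ in this coframe reproduces the same scalar. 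Since \eqref{eq:3_11} is already established, the straightforward substitution is the cleanest route.
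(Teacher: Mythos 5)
Your argument is correct and follows essentially the same route as the paper: project \eqref{eq:3_15} onto the vertical subspace, then identify the resulting scalar with $-K(x)=-R^1_{122}$ via \eqref{eq:3_8} and \eqref{eq:3_11} (the paper's equation \eqref{eq:3_17}). Your version is simply more explicit about which coefficients $\Gamma^k_{ij}$ vanish in the orthonormal frame; the Cartan structure-equation check you mention is a reasonable but unnecessary addendum.
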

\begin{proof}
By the definition of $K$, we have that  
\begin{equation}
K=g(R(e_1,e_2)e_2,e_1)= R^1_{122} \notag
\end{equation} 
where $e_1$, $e_2$ is an orthonormal frame.
Then, using \eqref{eq:3_8}  and \eqref{eq:3_11}, we obtain that  
\begin{equation}\label{eq:3_17}
K(x) = R^1_{122} =   
e_1 c^2_{12}(x) - e_2 c^1_{12}(x) - (c^1_{12}(x))^2 - (c^2_{12}(x))^2.
\end{equation}  
Hence, by \eqref{eq:3_13} and \eqref{eq:3_15}, we get the required statement.
\end{proof}

\section{Wagner lift of a metric metric $g$ on $M$ to $\hat{g}$ en $P$}

We need to define $\hat{g}(X, Y)$ for any smooth vector fields on $P$. 
We have $TP=\mathcal{H}\oplus V$, hence any vector field can be expressed in terms of horizontal vector fields and fundamental vector fields. 
\begin{definition}\label{def:4_1}
The Wagner of a metric is defined as follows:
\begin{enumerate}
\item 
\begin{equation}
\hat g(E^h_a,E^h_b) = g(d\pi(E^h_a),d\pi(E^h_b))=g(e_a,e_b)=\delta_{ab} \notag
\label{eq:4_1}
\end{equation}
\item
\begin{equation}
\hat g(E^h_a,\partial_\varphi) = 0. \notag
\label{eq:4_2}
\end{equation}
\item 
\begin{equation}
\hat g(K(x)\partial_\varphi,K(x)\partial_\varphi)=1. \notag
\label{eq:4_3}
\end{equation} 
\end{enumerate}
\end{definition}
\begin{remark}
The frame $E^{h}=\left\{E_{1}^{h}, E_{2}^{h}\right\}$ is orthonormal with respect to $\hat{g}$. 
\end{remark}
\begin{remark}
The definition of $\hat g$ on the vertical vector fields follows from Wagner's construction for extension of metric (see Introduction and Proposition~\ref{prop:3_3}). 
\end{remark}
\begin{remark}
The equation \eqref{eq:4_3} shows that our metric can be defined only for 
$x\in M$ where $K(x)\neq 0$. If there exist points $x\in M$ such that $K(x)=0$, we say that 
the metric $\hat{g}$ has singularities along the fibers over these points. 
In what follows we consider two-dimensional Riemannian manifolds $M$ such that $K(x)\neq 0$ for all $x\in M$.
\end{remark}
\begin{definition}\label{def:4_2}
Let $e=\{e_{1}(x), e_{2}(x)\}$ be an orthonormal frame defined on an open set $U\subset M$. On $P=SO(M)$ we define the frame $\{\mathcal{E}_1,\mathcal{E}_2,\mathcal{E}_3\}$, which is orthonormal with respect to $\hat g$, as follows:
\begin{equation}
\begin{array}{l}\label{eq:4_18}
\mathcal{E}_1(x,\varphi) = E^h_1(x,\varphi) = e_1 - c^1_{12}(x) \partial_\varphi \\
\mathcal{E}_2(x,\varphi) = E^h_2(x,\varphi) = e_2 - c^2_{12}(x) \partial_\varphi \\
\mathcal{E}_3(x,\varphi) = K(x)\partial_\varphi
\end{array}
\end{equation}
where $c_{ab}^{c}$ are the structure functions of the vector fields on $M$, and  $K(x)$ is the scalar curvature at $x\in M$ of the metric $g$ on $M$.  
\end{definition}

\section{Relation between geometries of $M$ and $P$}

\subsection{Structure functions of the frame $\{\mathcal{E}_1,\mathcal{E}_2,\mathcal{E}_3\}$} 

\begin{proposition}\label{prop:5.1}
The structure functions $\hat{c}_{ij}^{k}$ of $\{\mathcal{E}_1,\mathcal{E}_2,\mathcal{E}_3\}$, $i,j,k=\overline{1,3}$,
and the structure functions $c_{ab}^{c}$ of $\{e_1, e_2\}$, $a,b,c=\overline{1,2}$ satisfy 
\begin{equation}
\begin{array}{lll}\label{eq:5_19}
\hat c^1_{12} = c^1_{12} & \hat c^1_{13} = 0 & \hat c^1_{23} = 0  
\\
\hat c^2_{12} = c^2_{12} & \hat c^2_{13} = 0 & \hat c^2_{23} = 0  
\\
\hat c^3_{12} = -1 & \hat c^3_{13} = \frac{e_1 K}{K} & \hat c^3_{23} = \frac{e_2 K}{K}  
\end{array}
\end{equation}
\end{proposition}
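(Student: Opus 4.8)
The plan is to compute the three brackets $[\mathcal{E}_1,\mathcal{E}_2]$, $[\mathcal{E}_1,\mathcal{E}_3]$, $[\mathcal{E}_2,\mathcal{E}_3]$ directly in the local trivialization $SO(M)|_U\cong U\times SO(2)$ with coordinates $(x,\varphi)$, expand each result in the frame $\{\mathcal{E}_1,\mathcal{E}_2,\mathcal{E}_3\}$ of Definition~\ref{def:4_2}, and read off the coefficients $\hat c^k_{ij}$. All of the needed preparatory work is already in the excerpt.

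First I would handle $[\mathcal{E}_1,\mathcal{E}_2]=[E^h_1,E^h_2]$ by quoting \eqref{eq:3_15}, which expresses this bracket as $c^1_{12}E^h_1+c^2_{12}E^h_2$ plus an explicit vertical term, together with Proposition~\ref{prop:3_3}, which identifies that vertical term as $-K(x)\partial_\varphi=-\mathcal{E}_3$. Since $E^h_1=\mathcal{E}_1$ and $E^h_2=\mathcal{E}_2$, this gives at once $\hat c^1_{12}=c^1_{12}$, $\hat c^2_{12}=c^2_{12}$, $\hat c^3_{12}=-1$, and all other $\hat c^k_{12}$ with $k\notin\{1,2,3\}$ are vacuous. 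For the remaining two brackets I would use bilinearity of the Lie bracket and the Leibniz rule $[fX,gY]=fg[X,Y]+f(Xg)Y-g(Yf)X$, combined with two elementary facts valid in the chosen trivialization: (i) $[e_a,\partial_\varphi]=0$, because by \eqref{eq:2_13} the lift of $e_a$ has coordinate components depending only on $x$ while $\partial_\varphi$ is the coordinate field; and (ii) $\partial_\varphi$ annihilates any function pulled back from $M$, so $\partial_\varphi K=\partial_\varphi c^a_{12}=0$. From (i)–(ii),
\begin{equation}
[\mathcal{E}_a,\mathcal{E}_3]=[e_a-c^a_{12}\partial_\varphi,\,K\partial_\varphi]=(e_aK)\partial_\varphi=\frac{e_aK}{K}\,\mathcal{E}_3,\qquad a=1,2,\notag
\end{equation}
which yields $\hat c^1_{a3}=\hat c^2_{a3}=0$ and $\hat c^3_{a3}=e_aK/K$, completing the table \eqref{eq:5_19}.

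The only point requiring genuine care — the ``hard part'', such as it is — is the justification of fact (i): one must be sure that in \emph{this particular} trivialization the horizontal lifts $E^h_a$ split as (a vector field pulled back from $M$) minus a ($\varphi$-independent) multiple of $\partial_\varphi$, which is exactly the content of \eqref{eq:2_13} and is a consequence of the fact that the connection coefficients $\Gamma^1_{a2}=c^a_{12}$ entering \eqref{eq:3_6} are functions on $M$. Once this is in place, every bracket reduces to the Leibniz rule applied to functions on $M$, and no further computation of connection or curvature coefficients is needed. As a consistency check I would also remark that the formulas for $\hat c^3_{13}$ and $\hat c^3_{23}$ make sense precisely because $K(x)\neq 0$ on $M$, in agreement with the standing hypothesis on the Wagner lift.
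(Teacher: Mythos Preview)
Your proposal is correct and follows essentially the same approach as the paper: compute the three brackets using \eqref{eq:3_15} with Proposition~\ref{prop:3_3} for $[\mathcal{E}_1,\mathcal{E}_2]$, and a direct Leibniz-rule computation of $[e_a-c^a_{12}\partial_\varphi,K\partial_\varphi]=(e_aK)\partial_\varphi$ for the other two. The paper's proof is terser (it just writes the bracket expansions without isolating your facts~(i) and~(ii)), but the logic is identical.
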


\begin{proof}
By definition, we have
\begin{equation}
[\mathcal{E}_i,\mathcal{E}_j] = \hat{c}_{ij}^{k}\mathcal{E}_k \, \quad 
[e_a, e_b] = c_{ab}^{c} e_c \quad i,j,k=1,2,3; a,b,c=1,2 \notag
\end{equation}
From \eqref{eq:3_15} and \eqref{eq:3_17}, using \eqref{eq:4_18}, we get that  
\begin{equation}\label{eq:5_20}
[E_{1}^{h}, E_{2}^{h}]=c_{12}^{1}E_{1}^{h}+c_{12}^{2}E_{2}^{h}-K\partial\varphi
\Longrightarrow
[\mathcal{E}_1,\mathcal{E}_2] = c^1_{12}\mathcal{E}_1 + c^2_{12}\mathcal{E}_2 - \mathcal{E}_3.
\end{equation}
By direct calculation we obtain 
\begin{equation}\label{eq:5_21}
[\mathcal{E}_1,\mathcal{E}_3] = [e_1-c^1_{12}(x)\partial_\varphi,K(x)\partial_\varphi] =  e_1 K \partial_\varphi = \frac{e_1 K}{K}\mathcal{E}_3 
\end{equation}
\begin{equation}\label{eq:5_22}
[\mathcal{E}_2,\mathcal{E}_3] = [e_2-c^2_{12}(x)\partial_\varphi,K(x)\partial_\varphi] =  e_2 K \partial_\varphi = \frac{e_2 K}{K}\mathcal{E}_3
\end{equation}
this proves the proposition.
\end{proof}

\subsection{The connection coefficients}\label{par:coeficientes-conexion}

In the same way as we got \eqref{eq:3_8}, we can obtain a similar expression for the coefficients of the connection $\hat\nabla$ on $P$ by setting   $X=\mathcal{E}_{1}$,  $Y=\mathcal{E}_{2}$, and  $Z=\mathcal{E}_{3}$ in  \eqref{eq:3_7}: 
\begin{equation}\label{eq:5_23}
\hat{\Gamma}_{ij}^{k} = \frac{1}{2}(\hat{c}_{ij}^{k} + \hat{c}_{ki}^{j} + \hat{c}_{kj}^{i}).
\end{equation}

\begin{proposition}\label{prop:5_2}
The connection coefficients of $\hat\nabla$ on $P$ are written as follows 
\begin{equation}
\begin{array}{lll}\label{eq:5_24}
\hat \Gamma^1_{12} = c^1_{12} & \hat \Gamma^1_{13} = 0 & \hat \Gamma^1_{13} = -\frac{1}{2} \\
\hat \Gamma^1_{22} = c^2_{12} & \hat \Gamma^1_{23} = \frac{1}{2} & \hat \Gamma^2_{23} = 0 \\
\hat \Gamma^1_{32} = \frac{1}{2} & \hat \Gamma^1_{33} = \frac{e_1 K}{K} & \hat \Gamma^2_{33} = \frac{e_2 K}{K}
\end{array}
\end{equation}
\end{proposition}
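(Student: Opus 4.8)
The plan is to substitute the structure functions $\hat c^k_{ij}$ from Proposition~\ref{prop:5.1} directly into formula~\eqref{eq:5_23}, which expresses the connection coefficients $\hat\Gamma^k_{ij}$ of the Levi-Civita connection $\hat\nabla$ in terms of the structure functions of the orthonormal frame $\{\mathcal E_1,\mathcal E_2,\mathcal E_3\}$. Since $\{\mathcal E_1,\mathcal E_2,\mathcal E_3\}$ is $\hat g$-orthonormal by Definition~\ref{def:4_2}, formula~\eqref{eq:3_8} applies verbatim in the three-dimensional setting, and everything reduces to bookkeeping with the nine nonzero entries listed in~\eqref{eq:5_19}.

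Concretely, I would first record the full antisymmetric table of $\hat c^k_{ij}$: the only nonzero ones (up to the antisymmetry $\hat c^k_{ij} = -\hat c^k_{ji}$) are $\hat c^1_{12} = c^1_{12}$, $\hat c^2_{12} = c^2_{12}$, $\hat c^3_{12} = -1$, $\hat c^3_{13} = e_1 K/K$, and $\hat c^3_{23} = e_2 K/K$; all $\hat c^k_{ij}$ with an index pattern not reducible to one of these vanish. Then for each pair $(ij)$ and each upper index $k$ I would evaluate $\hat\Gamma^k_{ij} = \tfrac12(\hat c^k_{ij} + \hat c^k_{ki}{}^{\text{(raised/lowered)}} + \hat c^k_{kj})$ — more precisely the right-hand side of~\eqref{eq:5_23}, $\tfrac12(\hat c^k_{ij} + \hat c^j_{ki} + \hat c^i_{kj})$ — by picking out which of the five basic nonzero symbols contribute. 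For instance, $\hat\Gamma^3_{12} = \tfrac12(\hat c^3_{12} + \hat c^2_{31} + \hat c^1_{32}) = \tfrac12(-1 + 0 + 0) = -\tfrac12$, while $\hat\Gamma^2_{13} = \tfrac12(\hat c^2_{13} + \hat c^3_{12} + \hat c^1_{32}) = \tfrac12(0 + (-1) + 0) = -\tfrac12$, and $\hat\Gamma^1_{23} = \tfrac12(\hat c^1_{23} + \hat c^3_{21} + \hat c^2_{31}) = \tfrac12(0 + 1 + 0) = \tfrac12$; the purely horizontal coefficients $\hat\Gamma^1_{12}$, $\hat\Gamma^2_{12}$ come out equal to $c^1_{12}$, $c^2_{12}$ exactly as on the base, and the entries involving $\mathcal E_3$ twice produce the $e_iK/K$ terms. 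One then reads off the table~\eqref{eq:5_24}.

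The main ``obstacle'' is not conceptual but organizational: formula~\eqref{eq:5_23} mixes index positions (it is the orthonormal-frame formula~\eqref{eq:3_8} with $c^j_{ki}$ and $c^i_{kj}$ having the upper index cycled), so one has to be careful to apply the antisymmetry $\hat c^k_{ij} = -\hat c^k_{ji}$ consistently and to track every one of the $3\times 3$ independent $(ij)$ slots and three values of $k$ without sign errors. I would organize the verification as a short table rather than prose, checking that the list exhausts all nonzero $\hat\Gamma^k_{ij}$ and that the antisymmetry of $\hat\Gamma$ in its lower indices in the metric sense (equivalently $\hat\Gamma^k_{ij}+\hat\Gamma^j_{ik}$ symmetric appropriately) is respected — this serves as an internal consistency check that no term has been dropped. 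Beyond that, the proof is a direct substitution, so I would simply say ``by direct calculation from~\eqref{eq:5_23} and Proposition~\ref{prop:5.1}'' and display the resulting table.
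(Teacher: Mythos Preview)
Your approach is exactly the paper's: the authors' proof is the one-line ``We substitute \eqref{eq:5_19} in \eqref{eq:5_23}, then get \eqref{eq:5_24}'', and you have spelled out that substitution in detail. One caution on your sample computations: in applying $\hat\Gamma^k_{ij}=\tfrac12(\hat c^k_{ij}+\hat c^{\,j}_{ki}+\hat c^{\,i}_{kj})$ you swapped the lower indices in the last two terms (e.g.\ for $\hat\Gamma^2_{13}$ the middle term is $\hat c^3_{21}=+1$, not $\hat c^3_{12}$), so double-check the signs when you assemble the final table.
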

Here $K=K(x)$ is the curvature of $(M,g)$, and $e_{i}K$ means that the vector field $e_{i}$ is applied to the scalar field  $K(x)$.
\begin{proof}
We substitute \eqref{eq:5_19} in \eqref{eq:5_23}, then  get \eqref{eq:5_24}.
\end{proof}

\subsection{Coordinates of the curvature tensor}

Using expression \eqref{eq:3_11} for the components of the curvature tensor in terms of the connection coefficients and the structure functions, we get the following expression for the curvature tensor coordinates: 
\begin{equation}\label{eq:5_25}
\hat{R}_{ijk}^l = \mathcal{E}^{h}_i\hat{\Gamma}^{l}_{jk} - \mathcal{E}^{h}_j\hat{\Gamma}^{l}_{ik} + \hat{\Gamma}^{l}_{is}\hat{\Gamma}^{s}_{jk} 
- \hat{\Gamma}^{l}_{js}\hat{\Gamma}^{s}_{ik} - \hat{c}^{s}_{ij} \hat{\Gamma}^{l}_{sk}
\end{equation}

\begin{proposition}\label{prop:5_3}

The coordinates $\hat{R}^{l}_{ijk}$ of the curvature tensor are written as follows:
\begin{equation}\label{eq:5_26}
\begin{array}{l}
\hat R_{1212} = \frac{3}{4} - K,
\\
\hat R_{1213} = \frac{e_1 K}{K},
\\
\hat R_{1223} = \frac{e_2 K}{K},
\\
\hat R_{1313} = -\frac{1}{4} - e_1(\frac{e_1 K}{K}) - c^1_{12} \frac{e_2 K}{K} 
+ \left(\frac{e_1 K }{K}\right)^2,  
\\
\hat R_{1323} = - e_1(\frac{e_2 K}{K}) + c^1_{12} \frac{e_1 K}{K} 
+ \frac{e_1 K }{K}\frac{e_2 K }{K},
\\
\hat R_{2323} = -\frac{1}{4} - e_2(\frac{e_2 K}{K}) + c^2_{12} \frac{e_1 K}{K} 
+ \left(\frac{e_2 K }{K}\right)^2.  
\end{array}
\end{equation}
\end{proposition}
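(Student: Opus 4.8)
The plan is to compute each of the six independent components $\hat R_{ijk}^{l}$ by plugging the data from Proposition~\ref{prop:5.1} and Proposition~\ref{prop:5_2} directly into the intrinsic curvature formula \eqref{eq:5_25}. Since $\dim P = 3$, up to the antisymmetry $\hat R_{ijk}^{l} = -\hat R_{jik}^{l}$ and the metric symmetries (all $\hat\Gamma$'s are written in the orthonormal frame $\{\mathcal{E}_1,\mathcal{E}_2,\mathcal{E}_3\}$, so $\hat R_{ijkl} := \hat g(\hat R(\mathcal{E}_i,\mathcal{E}_j)\mathcal{E}_k,\mathcal{E}_l)$ is antisymmetric in $(i,j)$ and in $(k,l)$ and symmetric under swapping the two pairs), the curvature tensor has exactly $\binom{3}{2}^2$ minus the extra symmetric-pair reduction, i.e.\ the six entries listed: $\hat R_{1212}$, $\hat R_{1213}$, $\hat R_{1223}$, $\hat R_{1313}$, $\hat R_{1323}$, $\hat R_{2323}$. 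So it suffices to evaluate \eqref{eq:5_25} for the index choices $(i,j,k)\in\{(1,2),(1,3),(2,3)\}$ paired with the appropriate lower index $l$, and then lower the last index with $\hat g = \delta$.

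I would carry this out in the following order. First, record the nonzero structure functions from \eqref{eq:5_19}: $\hat c^1_{12}=c^1_{12}$, $\hat c^2_{12}=c^2_{12}$, $\hat c^3_{12}=-1$, $\hat c^3_{13}=e_1K/K$, $\hat c^3_{23}=e_2K/K$, and the nonzero connection coefficients from \eqref{eq:5_24}, being careful to reconstruct the full list $\hat\Gamma^k_{ij}$ (the displayed table in Proposition~\ref{prop:5_2} has some evident typos in the superscripts/subscripts, so I would re-derive the needed entries from \eqref{eq:5_23}, e.g.\ $\hat\Gamma^3_{12}=-\tfrac12$, $\hat\Gamma^3_{21}=\tfrac12$, $\hat\Gamma^1_{32}=\hat\Gamma^2_{31}=\tfrac12$ — wait, more precisely $\hat\Gamma^1_{23}=\tfrac12$, $\hat\Gamma^1_{32}=-\tfrac12$ type relations coming from antisymmetry $\hat\Gamma^k_{ij}=-\hat\Gamma^j_{ik}$ — and $\hat\Gamma^3_{13}=e_1K/K$, $\hat\Gamma^3_{23}=e_2K/K$). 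Second, for $\hat R_{1212}$ I would note that the only derivative terms $\mathcal{E}_i\hat\Gamma^l_{jk}$ involve $\mathcal{E}_i$ applied to $c^1_{12},c^2_{12}$, which combine via \eqref{eq:3_8}, \eqref{eq:3_11}, \eqref{eq:3_17} to reproduce $-K$, while the purely algebraic $\hat\Gamma\hat\Gamma$ and $\hat c\,\hat\Gamma$ terms involving the $\pm\tfrac12$ entries and $\hat c^3_{12}=-1$ produce the constant $\tfrac34$. Third, $\hat R_{1213}$ and $\hat R_{1223}$: here the structure-function term $-\hat c^s_{12}\hat\Gamma^l_{s3}$ with $s=3$ feeds $\hat\Gamma^l_{33}=e_lK/K$ against $\hat c^3_{12}=-1$, and the derivative and remaining product terms cancel, leaving $e_1K/K$ and $e_2K/K$ respectively. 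Fourth, the three ``vertical-heavy'' components $\hat R_{1313},\hat R_{1323},\hat R_{2323}$: these are where the second derivatives of $K$ appear, via $\mathcal{E}_1\hat\Gamma^3_{13}=\mathcal{E}_1(e_1K/K)=e_1(e_1K/K)$ (using $\mathcal{E}_1=e_1-c^1_{12}\partial_\varphi$ and that $e_1K/K$ is $\varphi$-independent), together with the $-\hat c^s_{ij}\hat\Gamma^l_{sk}$ term which, for $(i,j)=(1,3)$, uses $\hat c^3_{13}=e_1K/K$, and with the $-\tfrac14$ arising from $\hat\Gamma^1_{23}\hat\Gamma^3_{\cdot\cdot}$-type products of the $\pm\tfrac12$ entries.

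The main obstacle is purely bookkeeping: assembling the correct and complete table of $\hat\Gamma^k_{ij}$ from \eqref{eq:5_23} (including the ``mixed'' ones not shown, and getting all signs from the antisymmetry right), and then tracking every one of the five terms in \eqref{eq:5_25} through the contraction over $s=1,2,3$ without dropping a term — in particular making sure the $\partial_\varphi$-components of $\mathcal{E}_1,\mathcal{E}_2$ do not contribute spurious derivatives (they don't, since all relevant coefficients are pulled back from $M$), and that the $-K$ in $\hat R_{1212}$ really is the full Gaussian curvature and not something off by the $c^2_{12}$-terms. I expect the derivative terms to collapse cleanly using \eqref{eq:3_17}, so the whole proof reduces to a finite, if somewhat tedious, algebraic verification; I would organize it as a short computation of each component in turn, citing \eqref{eq:5_25}, \eqref{eq:5_19}, \eqref{eq:5_24}, and \eqref{eq:3_17}, and then remark that all other components vanish or are determined by the symmetries of the curvature tensor in dimension three.
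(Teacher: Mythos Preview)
Your approach is exactly the paper's: the authors' proof is a one-line ``substitute \eqref{eq:4_18}, \eqref{eq:5_22} and \eqref{eq:5_24} in \eqref{eq:5_25}'', i.e.\ plug the structure functions and connection coefficients into the curvature formula and simplify, which is precisely your plan. One small bookkeeping caution in your parenthetical: by the metric antisymmetry $\hat\Gamma^l_{ij}=-\hat\Gamma^j_{il}$ one has $\hat\Gamma^3_{13}=0$, and the nonzero entry carrying $e_1K/K$ is $\hat\Gamma^1_{33}$ (equivalently $\hat\Gamma^3_{31}=-e_1K/K$), but this is exactly the kind of sign-tracking you already flag as the only real obstacle.
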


\begin{proof}
We substitute \eqref{eq:4_18}, \eqref{eq:5_22} and \eqref{eq:5_24} in \eqref{eq:5_25}, then get the result.
\end{proof}

\subsection{Geodesics}

In order to understand better the relation between the geometrical properties of a metric $g$ on $M$ and its Wagner lift $\hat g$ on $P$, we will consider the relation between geodesics of $g$ and $\hat g$.  
\label{teo:5_1}
\begin{theorem}
Let $\hat\gamma$ be a geodesic of the connection   $\hat \nabla$ on $P$, written with respect to local coordinates as $x^i = \hat\gamma^i(t)$. 
If we denote by $Q^{i}(t)$ the coordinates of the tangent vector field along the geodesic  $\hat\gamma$, 
\begin{equation}
\frac{d}{dt}\hat\gamma(t) = Q^i(t) \mathcal{E}_i |_{\gamma(t)}, \notag
\end{equation}
then the functions   $Q^i(t)$ satisfy the equations 
\begin{eqnarray}
\frac{dQ^1}{dt} + c^1_{12} Q^1 Q^2 + c^2_{12} (Q^2)^2 + Q^2 Q^3 + \frac{e_1 K}{K}(Q^3)^2 = 0
\label{eq:5_28}
\\ 
\frac{dQ^2}{dt} - c^1_{12} (Q^1)^2 - c^2_{12} Q^1 Q^2 - Q^1 Q^3 + \frac{e_2 K}{K}(Q^3)^2 = 0
\label{eq:5_29}
\\
\frac{dQ^3}{dt} - \frac{e_1 K}{K} Q^1 Q^3 - \frac{e_2 K}{K} Q^2 Q^3  = 0
\label{eq:5_30}
\end{eqnarray} 
\end{theorem}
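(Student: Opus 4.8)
The plan is to write the geodesic equation $\hat\nabla_{\dot{\hat\gamma}}\dot{\hat\gamma}=0$ in the moving frame $\{\mathcal{E}_1,\mathcal{E}_2,\mathcal{E}_3\}$ and to substitute the connection coefficients $\hat\Gamma^k_{ij}$ already computed in Proposition~\ref{prop:5_2}. First I would set $V:=\frac{d}{dt}\hat\gamma(t)=Q^i(t)\,\mathcal{E}_i|_{\hat\gamma(t)}$. Since $\hat\nabla$ is a linear connection and $\hat\nabla_{\mathcal{E}_i}\mathcal{E}_j=\hat\Gamma^k_{ij}\mathcal{E}_k$, the Leibniz rule gives
\[
\hat\nabla_V V=\bigl(V Q^k\bigr)\mathcal{E}_k+Q^iQ^j\,\hat\nabla_{\mathcal{E}_i}\mathcal{E}_j=\Bigl(\tfrac{dQ^k}{dt}+\hat\Gamma^k_{ij}\,Q^iQ^j\Bigr)\mathcal{E}_k,
\]
where $\tfrac{dQ^k}{dt}=VQ^k=\tfrac{d}{dt}\bigl(Q^k\circ\hat\gamma\bigr)$ denotes the derivative of $Q^k$ along $\hat\gamma$. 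Because $\{\mathcal{E}_1,\mathcal{E}_2,\mathcal{E}_3\}$ is a frame along the curve, vanishing of $\hat\nabla_V V$ is equivalent to the three scalar equations $\tfrac{dQ^k}{dt}+\hat\Gamma^k_{ij}Q^iQ^j=0$, $k=1,2,3$.

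Next I would carry out the summation over $i,j$. As $Q^iQ^j$ is symmetric in $i$ and $j$, only the symmetrized coefficients contribute: the monomial $(Q^a)^2$ is multiplied by $\hat\Gamma^k_{aa}$, and for $a<b$ the monomial $Q^aQ^b$ is multiplied by $\hat\Gamma^k_{ab}+\hat\Gamma^k_{ba}$. These combinations I would read off from the table \eqref{eq:5_24} in Proposition~\ref{prop:5_2} (or, as a consistency check, recompute them directly from the structure functions of Proposition~\ref{prop:5.1} via the Koszul-type formula \eqref{eq:5_23}, a one-line calculation for each entry). Substituting and grouping the six monomials $(Q^1)^2,(Q^2)^2,(Q^3)^2,Q^1Q^2,Q^1Q^3,Q^2Q^3$ in each of the three equations then produces \eqref{eq:5_28}--\eqref{eq:5_30}: the terms in $c^1_{12},c^2_{12}$ come from the ``base'' structure functions $\hat c^1_{12},\hat c^2_{12}$, the cross terms $Q^2Q^3$ and $Q^1Q^3$ in the first two equations come from the curvature-induced structure function $\hat c^3_{12}=-1$, and the coefficients $\frac{e_aK}{K}$ come from $\hat c^3_{a3}=\frac{e_aK}{K}$; the absence of a $Q^1Q^2$ term in the third equation reflects the cancellation $\hat\Gamma^3_{12}+\hat\Gamma^3_{21}=0$.

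The computation is essentially routine; the one point requiring care is that $\{\mathcal{E}_1,\mathcal{E}_2,\mathcal{E}_3\}$ is not a holonomic (coordinate) frame, so $\hat\Gamma^k_{ij}\neq\hat\Gamma^k_{ji}$ in general, and one must symmetrize the connection coefficients before reading off the coefficient of each quadratic monomial. A large number of the $\hat\Gamma^k_{ij}$ vanish by Proposition~\ref{prop:5_2}, which is what keeps the resulting system small: the only genuinely new features relative to the geodesic equations of $(M,g)$ are the coupling between $Q^3$ and the horizontal components $Q^1,Q^2$ and the logarithmic-derivative terms $\frac{e_aK}{K}(Q^3)^2$. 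As an alternative one could instead pass to local coordinates $(x^1,x^2,\varphi)$ on $P$, use \eqref{eq:4_18} to expand the $\mathcal{E}_i$ in the coordinate frame, and apply the standard coordinate geodesic equation, but this is longer since Proposition~\ref{prop:5_2} already supplies $\hat\Gamma^k_{ij}$ with respect to $\{\mathcal{E}_i\}$.
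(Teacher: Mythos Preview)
Your proposal is correct and follows essentially the same approach as the paper: write the geodesic equation $\hat\nabla_{\dot{\hat\gamma}}\dot{\hat\gamma}=0$ in the frame $\{\mathcal{E}_i\}$ to obtain $\tfrac{dQ^k}{dt}+\hat\Gamma^k_{ij}Q^iQ^j=0$, and then substitute the coefficients from Proposition~\ref{prop:5_2}. Your version is simply more detailed than the paper's, in particular the remark that one must symmetrize $\hat\Gamma^k_{ij}$ in $i,j$ before reading off coefficients (since the frame is non-holonomic) is a useful clarification that the paper leaves implicit.
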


\begin{proof}
If $\hat\gamma$ is geodesic, we have 
\begin{equation}
\nabla_{\dfrac{d\hat\gamma}{dt}}\dfrac{d\hat\gamma}{dt} = \nabla_{\dot{\hat\gamma}}{\dot{\hat\gamma}} =0. \notag
\end{equation}
which gives us
\begin{equation}\label{eq:5_27}
\frac{dQ^k}{dt} + \hat\Gamma^k_{ij}(\gamma(t))Q^i(t) Q^j(t) = 0.
\end{equation} 
Thus, by  (\ref{eq:5_24}), we obtain \eqref{eq:5_28}--\eqref{eq:5_30}.  
\end{proof}

\begin{theorem}\label{teo:5_2}
Let  $\hat\gamma(t)$ be a  geodesic of the  metric $\hat g$ on $P$, $\gamma(t)$  its projection on $M$, $t \in [0,a]$. Then
\begin{enumerate}
\item 
If $\frac{d\hat\gamma}{dt}(t)$ is horizontal en $t_0$, then  
$\hat \gamma$ is a horizontal curve, that is $\hat\gamma$ is tangent to $\mathcal{H}$ for all  $t$.
\item 
There holds the equation:
\begin{equation}
\dfrac{\hat g\left(\mathcal{E}_3,\frac{d\hat\gamma}{dt}(t)\right)}{K(\gamma(t))} =C =\text{const.}\notag
\end{equation}
\item 
The curve $\gamma$ satisfies the differential equation 
\begin{equation}
\nabla_{\frac{d\gamma}{dt}}\frac{d\gamma}{dt} = C K J (\dot\gamma) - C^2 K grad K, \notag
\label{eq:2_28}
\end{equation} 
where $J$ is the operator of the complex structure on $M$ associated with the metric $g$.
\item 
If  $\hat \gamma$ is a horizontal geodesic of the metric $\hat g$ on $P$, then $\gamma$ is a geodesic of the metric $g$ on $M$.
\end{enumerate}
\end{theorem}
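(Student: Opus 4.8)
The plan is to derive everything from the geodesic equations \eqref{eq:5_28}--\eqref{eq:5_30} together with two elementary observations. First, since $\{\mathcal{E}_1,\mathcal{E}_2,\mathcal{E}_3\}$ is $\hat g$-orthonormal we have $\hat g\!\left(\mathcal{E}_3,\tfrac{d\hat\gamma}{dt}\right)=Q^3$; second, since $d\pi(\mathcal{E}_a)=e_a$ while $\mathcal{E}_3=K\partial_\varphi$ is vertical, the projected velocity is $\tfrac{d\gamma}{dt}=Q^1 e_1+Q^2 e_2$, and therefore $\tfrac{dK}{dt}=dK(\dot\gamma)=Q^1(e_1K)+Q^2(e_2K)$. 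This last identity will be used repeatedly.

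For part (1) I would regard the functions $Q^1(t),Q^2(t)$ as given (they are the first two velocity components of the geodesic), so that \eqref{eq:5_30} becomes the linear homogeneous equation $\dot Q^3=\bigl(\tfrac{e_1K}{K}Q^1+\tfrac{e_2K}{K}Q^2\bigr)Q^3$ for $Q^3$ alone. If $Q^3(t_0)=0$ then uniqueness for linear ODEs forces $Q^3\equiv0$, i.e. $\dot{\hat\gamma}$ stays in $\mathcal H=\mathrm{span}\{\mathcal E_1,\mathcal E_2\}$ for all $t$. For part (2) I would differentiate: $\tfrac{d}{dt}\!\bigl(\tfrac{Q^3}{K}\bigr)=\tfrac{\dot Q^3\,K-Q^3\,\dot K}{K^2}$, then substitute $\dot Q^3$ from \eqref{eq:5_30} and $\dot K=Q^1(e_1K)+Q^2(e_2K)$; the numerator vanishes identically, so $\hat g(\mathcal E_3,\dot{\hat\gamma})/K=Q^3/K$ is constant. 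Conceptually this is the Clairaut first integral of the Killing field $\partial_\varphi$ (which commutes with every $\mathcal E_i$ and hence preserves $\hat g$), but the one-line computation is all that is needed.

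Part (3) is the computational heart. I would expand $\nabla_{\dot\gamma}\dot\gamma$ in the moving frame $\{e_1,e_2\}$ as $\nabla_{\dot\gamma}\dot\gamma=\bigl(\dot Q^c+\Gamma^c_{ab}Q^aQ^b\bigr)e_c$ with $a,b,c\in\{1,2\}$, where by \eqref{eq:3_8} the two-dimensional Levi-Civita symbols with respect to an orthonormal frame are $\Gamma^1_{12}=c^1_{12}$, $\Gamma^1_{22}=c^2_{12}$, $\Gamma^2_{11}=-c^1_{12}$, $\Gamma^2_{21}=-c^2_{12}$ and all other symbols zero (consistent with \eqref{eq:3_6} and \eqref{eq:2_13}). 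The $e_1$-component then equals $\dot Q^1+c^1_{12}Q^1Q^2+c^2_{12}(Q^2)^2$, which by \eqref{eq:5_28} equals $-Q^2Q^3-\tfrac{e_1K}{K}(Q^3)^2$; likewise the $e_2$-component equals $\dot Q^2-c^1_{12}(Q^1)^2-c^2_{12}Q^1Q^2$, which by \eqref{eq:5_29} equals $Q^1Q^3-\tfrac{e_2K}{K}(Q^3)^2$. Recognizing $-Q^2 e_1+Q^1 e_2=J(\dot\gamma)$ (with $Je_1=e_2$, $Je_2=-e_1$) and $(e_1K)e_1+(e_2K)e_2=\operatorname{grad}K$, this gives $\nabla_{\dot\gamma}\dot\gamma=Q^3\,J(\dot\gamma)-\tfrac{(Q^3)^2}{K}\operatorname{grad}K$; substituting $Q^3=CK$ from part (2) produces the asserted equation. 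Part (4) is then immediate: a horizontal geodesic has $Q^3\equiv0$ (equivalently $C=0$), so the formula just obtained collapses to $\nabla_{\dot\gamma}\dot\gamma=0$, i.e. $\gamma$ is a geodesic of $g$.

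The only genuine obstacle I anticipate is inside part (3): correctly pinning down the two-dimensional Christoffel symbols of the orthonormal frame and, above all, keeping the signs straight when matching the moving-frame expressions against $J(\dot\gamma)$ and $\operatorname{grad}K$. Everything else is light: parts (1) and (4) follow from uniqueness for a linear ODE and from the part (3) formula, and part (2) is a single differentiation.
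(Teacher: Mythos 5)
Your proposal is correct, and for parts (2)--(4) it follows essentially the same route as the paper: part (2) is the integration of \eqref{eq:5_30} using $\dot K=Q^1(e_1K)+Q^2(e_2K)$, and part (3) is exactly the paper's computation --- substitute into \eqref{eq:5_28}--\eqref{eq:5_29}, recognize the left-hand sides as the frame components of $\nabla_{\dot\gamma}\dot\gamma$ via the two-dimensional Christoffel symbols $\Gamma^1_{12}=c^1_{12}$, $\Gamma^1_{22}=c^2_{12}$, $\Gamma^2_{11}=-c^1_{12}$, $\Gamma^2_{21}=-c^2_{12}$ (your list agrees with \eqref{eq:3_8}), and package the right-hand sides as $CKJ(\dot\gamma)-C^2K\,\mathrm{grad}\,K$. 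The genuine divergence is in part (1): the paper disposes of it by appealing to the general submersion fact that a geodesic meeting the orbits of the isometric $SO(2)$-action orthogonally once does so always (the O'Neill-type argument of Remark~\ref{obs:5_3}), whereas you read it off from \eqref{eq:5_30} as uniqueness for the linear ODE $\dot Q^3=a(t)Q^3$. Your route is more elementary and self-contained (it needs nothing beyond the geodesic system already derived), while the paper's buys conceptual clarity and generality for arbitrary invariant metrics on principal bundles. A further small improvement on your side: in part (2) you differentiate $Q^3/K$ directly rather than dividing \eqref{eq:5_30} by $Q^3$ as the paper does, which cleanly covers the case $Q^3\equiv 0$ that the paper has to treat as implicitly understood.
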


\begin{proof}
Let $\hat\gamma (t)$, $t\in [0,\alpha]$,  be a geodesic curve of the metric  $\hat{g}$ on $P$.
\begin{enumerate}
\item 
Since the horizontal distribution $\mathcal{H}$ is orthogonal to the orbits of the subgroup of the symmetry group of the metric, 
it is obvious that, if  $\dot{\hat{\gamma}} (t_{0})\in \mathcal{H}$, then $\dot{\hat{\gamma}} (t)\in \mathcal{H}$, for all $t\in [0, \alpha]$.
\item
Since the vectors $\mathcal{E}_{a}=E_{a}^{h}$, $a=1,2$ in $\mathcal{H}$ are horizontal lifts of the orthonormal frame $e_{a}$,  in $M$, we have that 
\begin{equation}\label{eq:5_32}
Q^{a}(t)=\dfrac{d}{dt}\hat{\gamma}^{a}(t)=\dfrac{d}{dt}\gamma^{a}(t), \quad a=1,2.
\end{equation} 
Then $\frac{d}{dt}\gamma(t) = Q^{a}(t)e_{a}$. On the other hand, assuming that $Q^{3}\neq 0$, we can divide (\ref{eq:5_30}) by $Q^{3}$, then  
\begin{equation}
\dfrac{\dot{Q}^{3}}{Q^{3}}=\dfrac{e_{1}K}{K}Q^{1}+\dfrac{e_{2}K}{K}Q^{2}=\dfrac{d}{dt}\left(\log (K(\gamma (t))\right). \notag
\end{equation}
By integration, we obtain that 
\begin{equation}
\log (Q^{3}(t)) = \log (K(\gamma(t))) + \log C \Longrightarrow \dfrac{Q^{3}(t)}{K(\gamma(t))}=C,\notag
\end{equation}
for all  $t$, where $K(\gamma(t)\neq 0$. Since $Q^{3}(t)=\hat{g}(\mathcal{E}_{3}, \dot{\hat{\gamma}}(t))$, we get the required statement.
\item 
If we substitute $Q^3 = C K$ in  \eqref{eq:5_28} and \eqref{eq:5_29}, and rewrite the equations in terms of the connection coefficients,
we get the following system of differential equations: 
\begin{eqnarray}
&&\dfrac{dQ^{1}}{dt}+\Gamma_{12}^{1}Q^{1}Q^{2}+\Gamma_{22}^{1}Q^{2}Q^{2}=-Q^{2}CK-\dfrac{e_{1}K}{K}\left(CK\right)^{2}, \label{eq:5_33}\\
&&\dfrac{dQ^{2}}{dt}+\Gamma_{11}^{2}Q^{1}Q^{1}+\Gamma_{21}^{2}Q^{2}Q^{1}=Q^{1}CK-\dfrac{e_{2}K}{K}\left(CK\right)^{2}. \label{eq:5_34}
\end{eqnarray}
The right hand side of \eqref{eq:5_33} and \eqref{eq:5_34} can be written in the matrix form:
\begin{equation}
=
\left(
\begin{array}{cc}
0 & -1 \\
1 & 0
\end{array}
\right)
\left(\begin{array}{c}
Q^{1}\\
Q^{2}
\end{array}
\right)
CK
-C^{2} K grad (K). \notag
\end{equation} 
The left hand side of \eqref{eq:5_33} and \eqref{eq:5_34} is simply the covariant derivative $\nabla_{\dot\gamma} \dot\gamma$ (see \eqref{eq:5_32}), therefore these two equations can be written as one equation  
\begin{equation}\label{eq:5_35}
\nabla_{\dot\gamma}\dot\gamma = C\cdot K J(\dot\gamma) - C^{2}\cdot K grad (K)
\end{equation}
where $J$ is the operator of the complex structure on $M$ associated with the metric $g$ on $M$.
\item 
From the considerations above it follows that, if   $\hat\gamma (t)$ is a horizontal geodesic, that is  $Q^{3}(t)=0$, and hence $C=0$, we get, by  
 \eqref{eq:5_35}, that  $\gamma (t)$ is a geodesic on $M$.
\end{enumerate}
\end{proof}

\begin{remark}\label{obs:5_3}
The fact that the horizontal geodesics project onto geodesics on the base is of general nature. 
Let us consider a principal  $G$-bundle  $\pi : P \to M$ and a  $G$-invariant metric $\hat g$ on  $P$. Then  a metric  $\hat g$ induces a metric
$g$ on $M$ such that for all $X, Y \in T_p P$ orthogonal to the fiber passing through $u \in P$, we have $\hat g(X,Y) = g(d\pi X, d\pi Y)$. 
In this case, if a geodesic 
$\hat\gamma$ is orthogonal to the fibre at the one point, then $\hat\gamma$ meets all the other fibers orthogonally and projects onto a geodesic of $g$ on $M$  (\cite{BN}, see also \cite{Montgomery}).  
\end{remark}

\begin{remark}\label{obs:5_31}
On a two-dimensional Riemannian manifold $(M,g)$ the Wong equation (\cite{Montgomery}, Sect. 12.2) is the equation 
\begin{equation}
\nabla_{\dot\gamma} \dot\gamma = -\lambda (i_{\dot\gamma}\Omega)^\#,
\label{eq:5_352}
\end{equation}
where $\Omega$ is the area form, $\lambda$ is a constant, and $\#$ stands for the operation of index rising. 
 
In case the curvature $K$ of $(M,g)$ is constant, from \eqref{eq:5_35} it follows that the projection $\gamma$ of a geodesic $\hat\gamma$ satisfies the equation 
\begin{equation}\label{eq:5_351}
\nabla_{\dot\gamma}\dot\gamma = C\cdot K J(\dot\gamma), 
\end{equation}
which is exactly the Wong equation \eqref{eq:5_352}.

\end{remark}

\section{Examples}

If the curvature of a metric  $g$ is constant (and nonzero)  the formulas for all the geometrical objects on  $P$ become simpler. Let us consider the cases of positive and negative constant curvatures. 

\subsection{Example 1: Constant positive curvature}


Let us consider a  2-sphere  $\mathbf{S}^2$ with standard metric, and let $P$ is the bundle of positively oriented orthonormal frames on $M=\mathbf{S}^{2}$.
Note that, for any orthonormal frame $e=\{e_{1}, e_{2}\}$ at $x \in \mathbb{S}^2$,  the frame  $e'=\{e_{1}, e_{2}, x\}$ determines a  matrix $A\in SO(3)$, therefore $P$ is diffeomorphic to $SO(3)$. 

To explain how the group $G=SO(2)$ acts on $P\cong SO(3)$, consider  $SO(2)$ as a subgroup of  $SO(3)$ with the following inclusion $\iota : SO(2) \to SO(3)$: 
\begin{equation}
B=
\left(
\begin{array}{cc}
\cos\varphi & -\sin\varphi \\
\sin\varphi & \cos\varphi
\end{array}
\right)
\Longrightarrow
\iota(B)=
\left(
\begin{array}{ccc}
\cos\varphi & -\sin\varphi & 0\\
\sin\varphi & \cos\varphi & 0 \\
0 & 0 & 1
\end{array}
\right)
 \notag
\end{equation}
This matrix describes the rotation of the frame about the third axis defined by 
$x$, therefore $R_{B}A=AB$, for $A \in SO(3)$, $B \in SO(2)$.

Given a matrix $A\in SO(3)$, its third column defines the point  $x\in \mathbf{S}^{2}$, which can be obtained as $x= A\textbf{k}$, where  $\textbf{k}$ is the third standard coordinate vector of $\mathbf{R}^{3}$, hence the projection  $\pi : P \to \mathbf{S}^2$ can be described as follows:
\begin{equation}
P\cong SO(3), \quad \pi : SO(3)\longrightarrow \mathbf{S}^{2},\quad A\mapsto A\textbf{k}. \notag
\end{equation}
Thus the principal bundle $\pi : P \to \mathbf{S}^2$ is isomorphic to the principal $SO(2)$-bundle $q : SO(3) \to \mathbf{S}^2$, where $q : A \to A \textbf{k}$ and $SO(2)$ acts on $SO(3)$ from the right as follows  $(B, A) \in SO(2)\times SO(3) \to A B$.

The generators of the Lie algebra $\mathfrak{so}(3)$ of the Lie group $SO(3)$ are  
\begin{equation}
\xi_1 =
\left(
\begin{matrix}
0 & 0 & -1 
\\
0 & 0 &  0
\\
1 & 0 & 0 
\end{matrix}
\right),
\quad
\xi_2 =
\left(
\begin{matrix}
0 & 0 & 0 
\\
0  & 0 & -1
\\
0 & 1 & 0 
\end{matrix}
\right),
\quad
\xi_3 =
\left(
\begin{matrix}
0 & -1 & 0 
\\
1 & 0 & 0
\\
0 & 0 & 0 
\end{matrix}
\right). \notag
\end{equation}
The corresponding left invariant vector fields $\{\mathcal{E}_{1}, \mathcal{E}_{2}, \mathcal{E}_{3}\}$ ($\mathcal{E}_k(A) = A \xi_k$, $k=1,2,3$) have respectively the following one-parameter subgroups: 
\begin{eqnarray}
\phi_1(t) &=&
\left(
\begin{matrix}
\cos t & 0  & -\sin t 
\\
  0    & 1 &   0
\\
\sin t & 0 &  \cos t 
\end{matrix}
\right),
\quad
\phi_2 (t)=
\left(
\begin{matrix}
1 & 0 & 0 
\\
  0    & \cos t &   -\sin t
\\
0 & \sin t & \cos t 
\end{matrix}
\right), \nonumber\\
\quad
\phi_3 (t) &=&
\left(
\begin{matrix}
\cos t & -\sin t & 0 
\\
  \sin t    & \cos t &   0
\\
0 & 0 & 1 
\end{matrix}
\right).\notag
\end{eqnarray}
Recall that 
\begin{equation}
[\xi_1,\xi_2]=\xi_3, 
\quad 
[\xi_3,\xi_1]=\xi_2, 
\quad 
[\xi_2,\xi_3]=\xi_1. 
\label{eq:6_36}
\end{equation} 
It is clear that $\mathcal{E}_3 =  A \xi_{3}$ is the fundamental field of the principal bundle $q : SO(3) \to \mathbb{S}^2$.

Now let us find  the horizontal distribution  on  $SO(3)$ determined by the Levi-Civita connection   $\nabla$ of the standard metric  on  $M$. 

Let us take the orthonormal frame  $\{\textbf{i},\textbf{j}\}$ at the point  $\textbf{k} \in \mathbf{S}^2$. 
We will find the horizontal lifts of $\textbf{i}$ and  $\textbf{j}$ at the corresponding point  $I \in SO(3)$.  Let us consider the  curve  $\gamma(t) = (\sin t,0,\cos t)$ on the sphere such that  
tal que $\gamma(0)=\textbf{k}$ and $\frac{d}{dt}\gamma(0)=\textbf{i}$. Then, by the properties of parallel  translation with respect to $\nabla$, we get that the horizontal lift $\gamma^h$ of $\gamma$ with  $\gamma^h(0) = I \in SO(3)$ is $\gamma^h(t) = \phi_1(-t)$. 
Therefore, the  horizontal lift of the vector $\textbf{i}^h$ of the vector  $\textbf{i}$ at $I$ is 
$\frac{d}{dt}\gamma^h(0) = - \xi_1$.   
In the same way one can show that the horizontal lift $\textbf{j}^h$ of the vector $\textbf{j}$ 
is $ -\xi_2$.   
Therefore, the horizontal plane  $\mathcal{H}(I)$ at $I \in SO(3)$ is spanned by the vectors  $\xi_1$, $\xi_2$. 

The group  $SO(3)$ acts transitively  on $\mathbf{S}^2$ by isometries. One can easily show that the induced action of  $SO(3)$ on the total space $P$ of the bundle of the orthonormal frames of $\mathbf{S}^2$ is isomorphic to the left action of  $SO(3)$ on itself. Also, since  $SO(3)$ acts on $\mathbf{S}^2$  via  isometries, and so by  automorphisms of the  Levi-Civita connection, the group  $SO(3)$ acts on $P$ via the automorphism of the  horizontal distribution   $\mathcal{H}$. Thus, $\mathcal{H}$ is a left invariant distribution on
$SO(3)$, hence $\mathcal{H}(A) = span(A \xi_1,A \xi_2)$ for any $A \in SO(3)$. 

For the frame field  $\{\mathcal{E}_1,\mathcal{E}_2,\mathcal{E}_3\}$,
from \eqref{eq:6_36} we get that  
\begin{equation}
c^1_{23} = c^2_{31} = c^3_{12}  = 1,  \notag
\end{equation}
and the other structure functions vanish. 
 
From \eqref{eq:3_13} it follows that, for the nonholonomity tensor  $N$, we have  
$N(\mathcal{E}_1,\mathcal{E}_2)= \mathcal{E}_3$. Therefore $\{\mathcal{E}_1,\mathcal{E}_2,\mathcal{E}_3\}$ is a field of frames orthonormal with respect to the Wagner lift  $\hat g$ of $g$ to $\mathbf{R} P^3$.

From above it follows that the orthonormal frame bundle is isomorphic to the bundle $\mathbf{R} P^3 \to \mathbf{S}^2$, which can obtained from the Hopf bundle by taking the quotient with respect to the standard action of $\mathbb{Z}_2$ on $\mathbf{S}^3$, and  
the metric $\hat g$ is the standard elliptic metric on $\mathbf{R} P^3$, which can be obtained from the standard metric on  $\mathbf{S}^3$.

With this example one can also illustrate Theorem~\ref{teo:5_2}, see (\cite{Montgomery},~Sec. 12.2)  

\subsection{Example 2: Constant negative curvature}   

Let  $(M,g)$ be the Poincar\'e  model of the Lobachevskii plane, i.\,e. 
\begin{equation}
M = \{(x^1,x^2) \mid x^2 > 0\}\quad ;\quad g = \frac{(dx^1)^2+(dx^2)^2}{(x^2)^2}.\notag
\end{equation}
Let us consider a global orthonormal frame field 
\begin{equation}
e_1 = x^2 \partial_1\,;\quad e_2 = x^2 \partial_2\,, \notag
\end{equation}
where
$\partial_a$, $a=1,2$, is the natural frame for the global coordinate system  $(x^1,x^2)$ on $M$.
The Lie bracket of the vector fields of this frame  is  $[e_1,e_2]=-e_1$, hence the structure functions on the base are  $c^1_{12}=-1$, $c^2_{12}=0$. 

As we have a global coordinate system on  $M$, the principal bundle  $P$ of the positively oriented orthonormal frames is isomorphic to the trivial principal bundle $M \times \mathbf{S}^1 \to M$, and we get the coordinates  $(x^1,x^2,\varphi)$ on $P$. 
Now we can directly apply  \eqref{eq:4_18}, in order to find the orthonormal frame field on $M \times \mathbf{S}^1$ with respect to the metric  $\hat{g}$. 
This frame is 
\begin{equation}
\mathcal{E}_1 = e_1 + \partial_\varphi \, ,\quad \mathcal{E}_2 = e_2\, , \quad  \mathcal{E}_3 = \partial_\varphi. \notag
\end{equation}
We can find the structure equations on $P$ using  \eqref{eq:5_20}, \eqref{eq:5_21} and \eqref{eq:5_22}:
\begin{equation}
[\mathcal{E}_1,\mathcal{E}_2] = -\mathcal{E}_1 - \mathcal{E}_3,\quad 
[\mathcal{E}_2,\mathcal{E}_3] = 0, \quad
[\mathcal{E}_3,\mathcal{E}_1] = 0. \notag
\end{equation}   
Now, by  \eqref{eq:5_26} with $K=-1$, we find that the metric $\hat g$ on  
$P = M \times \mathbf{S}^1$ is a metric of nonconstant sectional curvature.  For example, 
the sectional curvature  $\hat K(\mathcal{E}_1 \wedge \mathcal{E}_2)$ is $7/4$, and  
$\hat K(\mathcal{E}_1 \wedge \mathcal{E}_3)$ is $1/4$. 
Note that in the case  $M = \mathbf{S}^2$ the metric $\hat g$ has constant sectional curvature $1/4$, if we apply the same formulas \eqref{eq:5_26} for $K=1$. 


\end{document}